\newtheorem{thm}{Theorem}
\newtheorem{lem}[thm]{Lemma}
\newtheorem{conj}[thm]{Conjecture}
\newtheorem{cor}[thm]{Corollary}
\newtheorem*{thm*}{Theorem}
\newtheorem{quest}[thm]{Question}
\theoremstyle{definition}
\theoremstyle{remark}
\renewcommand{\le}{\leqslant}
\renewcommand{\ge}{\geqslant}
\begin{document}

\title[Equitable partition of graphs into induced forests]{Equitable partition of graphs\\ into induced forests}

\author{Louis Esperet} \address{Laboratoire G-SCOP (CNRS,
   Grenoble-INP), Grenoble, France}
\email{louis.esperet@g-scop.fr}

\author{Laetitia Lemoine} \address{Laboratoire G-SCOP (CNRS,
   Grenoble-INP), Grenoble, France}
\email{laetitia.lemoine@g-scop.fr}

\author{Fr\'ed\'eric Maffray} \address{Laboratoire G-SCOP (CNRS,
   Grenoble-INP), Grenoble, France}
 \email{frederic.maffray@g-scop.fr}

\thanks{The authors are partially supported by ANR Project Stint
  (\textsc{anr-13-bs02-0007}), and LabEx PERSYVAL-Lab
  (\textsc{anr-11-labx-0025}).}

\date{}
\sloppy

\begin{abstract}
An \emph{equitable partition} of a graph $G$ is a partition of the
vertex-set of $G$ such that the sizes of any two parts differ by at
most one. We show that every graph with an acyclic coloring with at
most $k$ colors can be equitably partitioned into $k-1$ induced
forests. We also prove that for any integers $d\ge 1$ and $k\ge 3^{d-1}$,
any $d$-degenerate graph can be equitably partitioned into $k$ induced
forests. 

Each of these results implies the existence of a constant $c$ such
that for any $k \ge c$, any planar graph has an equitable partition
into $k$ induced forests. This was conjectured by Wu, Zhang, and Li in
2013.
\end{abstract}

\maketitle


An \emph{equitable partition} of a graph $G$ is a partition of the
vertex-set of $G$ such that the sizes of any two parts differ by at
most one. Hajnal and Szemer\'edi~\cite{HS70} proved the following
result, which was conjectured by Erd\H{o}s (see also \cite{KKMS10} for a shorter proof).

\begin{thm}\label{th:col}
For any integers $\Delta$ and $k\ge \Delta+1$, any graph with maximum degree $\Delta$ has
an equitable partition into $k$ stable sets.
\end{thm}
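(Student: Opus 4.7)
\medskip

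\noindent\textbf{Proof proposal.} This is the Hajnal–Szemer\'edi theorem, and my plan is to follow the standard exchange-argument strategy (of which the Kierstead–Kostochka–Mydlarz–Szemer\'edi proof~\cite{KKMS10} cited in the paper is the cleanest realization). I begin with two easy reductions. By padding $G$ with isolated vertices I may assume $k$ divides $n := |V(G)|$, so ``equitable'' becomes the requirement that each color class has size exactly $n/k$. Since the hypothesis $\Delta \le k-1$ only gets stronger if I raise $\Delta$, I may also assume $k = \Delta + 1$, which is the tight case; the result for larger $k$ follows.

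Next, I would proceed by induction on $n$. Pick any vertex $v$ and apply the inductive hypothesis to $G - v$ (first absorbing one of the isolated padding vertices into $v$ so that the vertex count remains a multiple of $k$). This yields a proper $k$-coloring $V_1, \ldots, V_k$ of $G - v$ with each $|V_i| = n/k$. Since $v$ has at most $\Delta = k-1$ neighbors, some class $V_b$ avoids $N(v)$, so putting $v$ into $V_b$ produces a proper $k$-coloring of $G$ that is \emph{near-equitable}: one class (call it $B$) has size $n/k + 1$ and another class (call it $S$) has size $n/k - 1$, while all the rest are balanced.

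The heart of the proof is to rebalance this coloring by a chain of single-vertex moves. I would encode the possible moves in an auxiliary digraph $H$ on the color classes: draw an arc $V_i \to V_j$ whenever there is a vertex $u \in V_i$ having no neighbor in $V_j$, so that recoloring $u$ with color $j$ keeps the coloring proper. A directed path from $B$ to $S$ in $H$ yields a sequence of vertex transfers that decreases $|B|$ by one and increases $|S|$ by one, producing an equitable coloring. Hence it suffices to show that $S$ is reachable from $B$ in $H$.

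The main obstacle is precisely this reachability statement. Suppose for contradiction that the set $\mathcal{A}$ of classes reachable from $B$ in $H$ does not contain $S$. Then for every $V_i \in \mathcal{A}$ and every $V_j \notin \mathcal{A}$, every vertex of $V_i$ has at least one neighbor in $V_j$; this gives a lower bound on the number of edges between $A := \bigcup_{V_i \in \mathcal{A}} V_i$ and its complement. The degree bound $\Delta(G) \le k-1$ gives a matching upper bound, and the two almost meet. To close the gap one has to choose $v$ at the start of the argument with extra care (it must lie in a carefully chosen ``solo'' or ``terminal'' class of an auxiliary hypergraph structure on $A$) so that an extra edge is squeezed out of the counting; this \emph{discharging-style} refinement — showing that the seeming slack of~$1$ in the counting can always be broken — is the genuinely delicate part, and is where the Kierstead–Kostochka argument must be invoked in full.
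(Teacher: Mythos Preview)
The paper does not give a proof of Theorem~\ref{th:col}: it is simply stated as the Hajnal--Szemer\'edi theorem, with references to~\cite{HS70} and~\cite{KKMS10}, and used only as motivation. There is therefore no proof in the paper to compare your proposal against.

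Judged on its own, your outline follows the KKMS strategy at a high level, but two details are off. The ``absorbing one of the isolated padding vertices into $v$'' manoeuvre is both unclear and unnecessary (and in tension with your reduction to $k\mid n$, since $G-v$ then has $n-1\not\equiv 0\pmod k$ vertices): simply apply induction to $G-v$ without insisting on divisibility, obtaining one class of size $n/k-1$ and $k-1$ classes of size $n/k$, and place $v$ into a neighbour-free class to reach the near-equitable coloring directly. More substantively, your account of how the reachability obstruction is broken is not accurate: in~\cite{KKMS10} the vertex $v$ is \emph{not} chosen with special care at the outset. The hard work takes place entirely after the near-equitable coloring is fixed, via an analysis of ``solo'' vertices in the reachable classes (vertices with a unique neighbour in some unreachable class) and a cascade of swaps driven by an auxiliary digraph on them. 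Since you explicitly hand this step off to the reference, your sketch is serviceable as a pointer, but the final paragraph should not be read as a faithful summary of the actual argument.
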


Note that there is no constant $c$, such that for any $k\ge c$, any
star can be equitably partitioned into $k$ stable sets.
Wu, Zhang, and Li made the following two conjectures~\cite{WZL13}.

\begin{conj}\label{conj:2}
There is a constant $c$ such that for any $k\ge c$, any planar graph
can be equitably partitioned into $k$ induced forests.
\end{conj}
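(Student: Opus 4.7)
The plan is to derive Conjecture~\ref{conj:2} from the first of the two auxiliary results stated in the abstract (the one on acyclic colorings), combined with Borodin's theorem that every planar graph admits an acyclic $5$-coloring. The second auxiliary result, on $d$-degenerate graphs, provides an alternative route since planar graphs are $5$-degenerate; with $d=5$ it directly yields the conjecture for all $k \ge 3^4 = 81$.

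Assume the abstract's first result: any graph with an acyclic $k$-coloring admits an equitable partition into $k-1$ induced forests. Then Conjecture~\ref{conj:2} follows with $c=4$. Indeed, given a planar graph $G$, Borodin's theorem provides an acyclic $5$-coloring. For any $k \ge 4$, reassign $k-4$ arbitrary vertices to $k-4$ new singleton color classes, yielding an acyclic $(k+1)$-coloring of $G$: the pairwise union of a singleton class with any other class is a star (or a subgraph of an existing pairwise union), hence still a forest, so acyclicity is preserved. Applying the abstract result to this $(k+1)$-coloring produces an equitable partition of $G$ into $k$ induced forests, as required.

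The bulk of the work is therefore to prove the abstract's first result. Let $V_1, \ldots, V_k$ be the color classes of the acyclic coloring. The pivotal structural fact is that $V_i \cup V_j$ induces a forest for every pair $i \neq j$. Consequently, as long as each target part $F_\ell$ is contained in the union of at most two original color classes, $F_\ell$ is automatically a forest. I would try to construct the $F_1, \ldots, F_{k-1}$ respecting this two-color containment by labelling each $F_\ell$ with a pair $(a_\ell, b_\ell)$ of original colors and maintaining $F_\ell \subseteq V_{a_\ell} \cup V_{b_\ell}$. A natural starting point is $F_\ell := V_\ell \cup A_\ell$ with $A_\ell \subseteq V_k$ (label $(\ell, k)$) and $A_1 \sqcup \cdots \sqcup A_{k-1} = V_k$ chosen so that sizes are as balanced as possible.

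The main obstacle is unbalanced color classes: if some $|V_\ell|$ already exceeds $\lceil n/(k-1) \rceil$, this simple scheme cannot equitize. To handle this, I would split the oversized color classes across several parts and let labels evolve through an exchange procedure that repeatedly migrates a single vertex from the heaviest part to a lightest part, updating its label and invoking the forest property of the relevant pairwise union at each step. Proving that a safe vertex to move always exists, and that labels update consistently without violating the two-color containment, is the genuinely delicate combinatorial step; I expect it to be the crux of the argument, whereas the reduction from the lemma to Conjecture~\ref{conj:2} is essentially formal.
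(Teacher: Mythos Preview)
Your high-level route---Borodin's acyclic $5$-coloring plus the lemma ``acyclic $k$-coloring $\Rightarrow$ equitable partition into $k-1$ induced forests''---matches the paper exactly, and your remark that the $d$-degenerate result gives $c=81$ as an alternative is also in the paper. (Your singleton-class trick for boosting $5$ colors to $k+1$ is correct but unnecessary: the lemma is stated for colorings with \emph{at most} $k$ colors, so empty extra classes already do the job.)

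The real gap is in your proof of the lemma itself. You sketch an exchange procedure on labelled parts $F_\ell \subseteq V_{a_\ell}\cup V_{b_\ell}$ and then concede that showing a safe move always exists is ``genuinely delicate'', leaving it unproved. That step is the entire content of the lemma, so as written nothing is established. The obstruction is real: once a target part $F_j$ already holds vertices of two distinct colours $a_j\neq b_j$, it cannot absorb a vertex of a third colour and remain two-coloured without first evacuating one of the two colours---and your scheme provides no argument that such cascading relabellings terminate or even that a consistent set of labels exists.

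The paper sidesteps this with a short induction on $k$. Take a smallest colour class, say $V_1$, so $|V_1|\le n/k\le n/(k-1)$; an averaging argument produces some $V_2$ with $|V_1\cup V_2|\ge n/(k-1)$; let $S$ consist of $V_1$ together with enough vertices of $V_2$ to make $|S|=\lfloor n/(k-1)\rfloor$. Then $S$ is an induced forest, $G-S$ carries the acyclic $(k-1)$-coloring $V_2\setminus S, V_3,\ldots,V_k$, and the induction hypothesis yields the remaining $k-2$ forests; a two-line arithmetic check confirms equitability. No exchanges or label maintenance are needed---the induction peels off one correctly sized forest at a time.
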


\begin{conj}\label{conj:1}
For any integers $\Delta$ and $k\ge \lceil \tfrac{\Delta+1}{2}\rceil$,
any graph of maximum degree $\Delta$ can be equitably partitioned into
$k$ induced forests.
\end{conj}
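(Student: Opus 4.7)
The plan is to prove Conjecture~\ref{conj:1} by induction on $n=|V(G)|$, combining the (non-equitable) vertex arboricity bound of Chartrand, Kronk and Wall --- every graph of maximum degree $\Delta$ admits a partition into $\lceil (\Delta+1)/2\rceil$ induced forests --- with an exchange argument in the spirit of Hajnal--Szemer\'edi (Theorem~\ref{th:col}).

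Fix $k\geq \lceil (\Delta+1)/2\rceil$ and pick a vertex $v\in V(G)$. Applying the induction hypothesis to $G-v$ yields an equitable partition $V_1,\ldots,V_k$ of $V(G)\setminus\{v\}$ into induced forests, with each $|V_i|\in\{s,s+1\}$ where $s=\lfloor (n-1)/k\rfloor$. It remains to insert $v$ while preserving both equitability and the forest property. Call a part $V_j$ \emph{bad} (for $v$) if some tree component of $V_j$ contains at least two neighbors of $v$, equivalently if $V_j\cup\{v\}$ contains a cycle. A bad part consumes at least two of $v$'s neighbors, so there are at most $\lfloor \Delta/2\rfloor$ bad parts, hence at least $k-\lfloor\Delta/2\rfloor\geq 1$ \emph{good} parts.

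If some good part has the smaller size $s$, or if all parts have size $s$, inserting $v$ there finishes the step. The difficult case is when every small part is bad while every good part has size $s+1$. The tool I would aim for is a \emph{swap lemma}: some bad small part $V_j$ contains a vertex $u$ lying on the $V_j$-path between two neighbors of $v$ such that $V_\ell\cup\{u\}$ still induces a forest for some good large part $V_\ell$. Moving $u$ from $V_j$ to $V_\ell$ destroys the obstruction inside $V_j$ (splitting the offending tree component) and rebalances the sizes, so $v$ can then be placed into $V_j\setminus\{u\}$, giving an equitable partition of $G$ into induced forests.

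The principal obstacle is precisely this swap lemma. In the Hajnal--Szemer\'edi setting, the ``can move'' relation between parts is local: a vertex is independent from a class iff it has no neighbor there. Here the forest property is global --- adding $u$ to $V_\ell$ can create a long cycle through vertices unrelated to $v$ --- so controlling single moves is not enough. The natural workaround is to build an auxiliary digraph on parts, with an arc $V_i\to V_j$ whenever a legal single-vertex transfer exists, and to argue via a reachability / alternating-path argument that a good small part can always be reached from the set of bad small parts through a chain of swaps. Making this reachability argument go through at the tight threshold $k=\lceil (\Delta+1)/2\rceil$, rather than at a weaker bound, is where I expect the real difficulty to lie, and is presumably why the conjecture remains open.
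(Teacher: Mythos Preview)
This statement is Conjecture~\ref{conj:1} in the paper, attributed to Wu, Zhang, and Li~\cite{WZL13}, and the paper does \emph{not} prove it. The paper's results (Theorems~\ref{th:acy} and~\ref{th:maind}) settle only Conjecture~\ref{conj:2} on planar graphs; Conjecture~\ref{conj:1} is merely stated and, as far as the paper is concerned, remains open. There is therefore no ``paper's own proof'' to compare your attempt against.

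Your proposal is not a proof either, and you acknowledge this in the final paragraph. The inductive setup and the count of ``bad'' parts (at most $\lfloor \Delta/2\rfloor$, leaving at least one good part) are correct and standard. The genuine gap is exactly where you place it: the swap lemma. You assert that some vertex $u$ on the $V_j$-path between two neighbours of $v$ can be moved into a good large part $V_\ell$ without creating a cycle, but you give no argument for this, and the non-locality you point out is a real obstruction --- a single move may always be illegal, and the auxiliary-digraph reachability idea is only a restatement of what needs to be proved, not a proof of it. Nothing guarantees that a chain of legal transfers ever reaches a small part at the tight threshold $k=\lceil(\Delta+1)/2\rceil$. So what you have written is a sensible diagnosis of where the difficulty in Conjecture~\ref{conj:1} lies, but it is a sketch of an approach with its key step missing, not a proof.
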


A proper coloring of a graph $G$ is \emph{acyclic} if any cycle of $G$
contains at least 3 colors. We first prove the following result.

\begin{thm}\label{th:acy}
Let $k\ge 2$. If a graph $G$ has an acyclic coloring with at most $k$ colors, then $G$ can be
equitably partitioned into $k-1$ induced forests.
\end{thm}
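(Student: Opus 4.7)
Let $V_1,\dots,V_k$ be the color classes of the given acyclic $k$-coloring of $G$, let $n=|V(G)|$, and write $n=(k-1)q+r$ with $0\le r<k-1$. We seek $r$ parts of size $q+1$ and $k-1-r$ parts of size $q$, each inducing a forest. The one structural fact we use is the defining property of an acyclic coloring: $G[V_i\cup V_j]$ is a forest for every $i\ne j$, and hence so is any subset of $V_i\cup V_j$. The whole proof thus reduces to producing an equitable partition in which every part lies in the union of at most two color classes.

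My primary tool is a \emph{star construction}. Pick one color class $V_*$ as a center and, for every $i\ne *$, form the part $F_i=V_i\cup S_i$, where $\{S_i\}_{i\ne *}$ is a partition of $V_*$. Each $F_i\subseteq V_i\cup V_*$ is automatically a forest, so the only question is the equitability of the sizes. Setting $t_i=|S_i|$, this becomes the integer feasibility system
\[
\max(0,\,q-|V_i|)\le t_i\le q+1-|V_i|,\qquad \sum_{i\ne *}t_i=|V_*|,
\]
together with the requirement that exactly $r$ of the $t_i$'s attain their upper bound. A short calculation (using $n=(k-1)q+r$) shows the system is solvable provided $|V_i|\le q+1$ for every $i\ne *$ and at most $r$ of those classes have size exactly $q+1$.

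If some choice of $V_*$ satisfies these two conditions, we are done. Otherwise several color classes exceed $q+1$ or equal $q+1$, and I would peel off a \emph{merger part} $F\subseteq V_a\cup V_b$ containing enough vertices of two ``problematic'' classes $V_a,V_b$ to bring their residual sizes back into the admissible range. Since $F\subseteq V_a\cup V_b$ it is automatically a forest; removing $F$ leaves a smaller instance (one fewer part to place, smaller residual sizes of $V_a,V_b$) in which a star construction is one step closer to applicability. The main technical obstacle is to show that this merger step can always be carried out with size $q$ or $q+1$, and that after finitely many such steps the residual instance admits the star construction with the remaining target sizes. This reduces to a careful, but elementary, case analysis based on how many color classes exceed or equal $q+1$, and how many oversized ($q+1$-sized) parts remain to be used.
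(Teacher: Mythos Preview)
Your central observation---that every part should lie in the union of at most two color classes---is the right one, and it is also the paper's key idea. But your two-phase plan (merger steps followed by a star construction) has a bookkeeping mismatch that the promised case analysis cannot repair. A star construction with center $V_*$ always produces exactly $k-1$ parts, one for each $i\ne *$. After $m$ merger steps you need only $k-1-m$ further parts, yet you still have $k$ color classes (your merger only shrinks the two ``problematic'' classes $V_a,V_b$; it does not delete either), so any subsequent star construction still outputs $k-1$ parts rather than $k-1-m$. Concretely, take $k=4$, $n=12$ (so $q=4$, $r=0$) and class sizes $5,5,1,1$: no choice of center works, and your merger (a part of size $4$ drawn from the two large classes) leaves four nonempty classes---say $3,3,1,1$---with two parts of size $4$ still to place. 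No class is now ``problematic,'' so no further merger is triggered, yet the star construction would give three parts, not two.

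The clean fix is to make each merger \emph{eliminate} a class: take $F=V_a\cup S_b$ where $V_a$ is a \emph{smallest} class (so $|V_a|\le n/k\le q$) and $S_b\subseteq V_b$ brings $|F|$ to $\lfloor n/(k-1)\rfloor$. Removing $F$ leaves $k-1$ classes and $k-2$ parts to place. Iterating this is simply an induction on $k$, and it is exactly the paper's proof: choose the smallest class $V_1$, find $V_2$ with $|V_1\cup V_2|\ge n/(k-1)$, let $S$ consist of $V_1$ together with enough of $V_2$ to reach size $\lfloor n/(k-1)\rfloor$, and recurse on $G-S$ with the acyclic $(k-1)$-coloring $V_2\setminus S,\,V_3,\dots,V_k$. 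No separate star phase or case analysis is needed.
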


\begin{proof}
The proof proceeds by induction on $k\ge 2$. If $k=2$ then $G$ itself
is a forest and the result trivially holds, so we can assume that
$k\ge 3$. Let $V_1,\ldots,V_k$ be the color classes in some acyclic
$k$-coloring of $G$ (note that some sets $V_i$ might be empty). Let
$n$ be the number of vertices of $G$. Without loss of generality, we
can assume that $V_1$ contains at most $\tfrac{n}{k}\le
\tfrac{n}{k-1}$ vertices. Observe that the sum of the number of
vertices in $V_1\cup V_i$, $2\le i \le k$, is $n+(k-2)|V_1|\ge n$. It follows
that there exists a color class, say $V_2$, such that $V_1\cup V_2$
contains at least $\tfrac{n}{k-1}$ vertices. Let $S$ be a set of
vertices of $G$ consisting of $V_1$ together with
$\lfloor\tfrac{n}{k-1}\rfloor-|V_1|$ vertices of $V_2$, and let $H$ be the graph
obtained from $G$ by removing the vertices of $S$. Note that $S$
induces a forest in $G$, and $H$ has an acyclic coloring with at most
$k-1$ colors (with color classes $V_2\setminus S,V_3,\ldots,V_k$). By the induction hypothesis, $H$ has an equitable
partition into $k-2$ induced forests, and therefore $G$ has an
equitable partition into $k-1$ induced forests.
\end{proof}

It was proved by Borodin~\cite{Bor79} that any planar graph has an acyclic coloring
with at most 5 colors. Therefore, Theorem~\ref{th:maind} implies
Corollary~\ref{cor:1}, which is a positive answer to
Conjecture~\ref{conj:2}.

\begin{cor}\label{cor:1}
For any $k\ge 4$, any planar graph can be equitably partitioned into $k$ induced forests.
\end{cor}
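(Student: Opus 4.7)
The plan is to deduce Corollary~\ref{cor:1} directly from Theorem~\ref{th:acy} combined with the fact, already recalled in the text, that every planar graph admits an acyclic coloring with at most $5$ colors. To output $k$ induced forests via Theorem~\ref{th:acy}, one needs to feed it an acyclic coloring with at most $k+1$ colors, so the task reduces to producing such a coloring of the planar graph $G$ for every $k\ge 4$.

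The only mild subtlety is that Borodin's theorem delivers exactly $5$ colors, whereas Corollary~\ref{cor:1} ranges over all $k\ge 4$. This is handled by a trivial padding trick: for $k\ge 4$ we have $k+1\ge 5$, so if $V_1,\ldots,V_5$ is an acyclic $5$-coloring of $G$, then $V_1,\ldots,V_{k+1}$ with $V_6=\cdots=V_{k+1}=\emptyset$ is still an acyclic coloring with at most $k+1$ colors. Theorem~\ref{th:acy} explicitly tolerates empty color classes (its statement says \emph{at most} $k$ colors and its proof remarks that some $V_i$ may be empty), and inspecting the induction in the proof of Theorem~\ref{th:acy} confirms that neither the choice of a minimum class $V_1$, nor the averaging argument bounding some $|V_1\cup V_2|$ from below, nor the extraction of the set $S$ relies on nonemptiness of any class.

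Applying Theorem~\ref{th:acy} to this padded acyclic $(k+1)$-coloring yields an equitable partition of $G$ into $k$ induced forests, which is exactly the statement of Corollary~\ref{cor:1}. There is essentially no obstacle: the proof is a one-line composition of Theorem~\ref{th:acy} with Borodin's theorem, the sole observation being that an acyclic coloring may be freely padded with empty color classes so that the single bound $5$ from Borodin's theorem suffices to handle every $k\ge 4$ simultaneously.
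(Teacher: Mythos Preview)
Your proposal is correct and follows exactly the route the paper intends: combine Borodin's bound of $5$ on the acyclic chromatic number of planar graphs with Theorem~\ref{th:acy}. The padding with empty classes is already built into the phrase ``at most $k$ colors'' in Theorem~\ref{th:acy}, so your extra care there is harmless but not strictly needed; note also that the paper's sentence preceding the corollary cites Theorem~\ref{th:maind} by what is evidently a typo for Theorem~\ref{th:acy} (as the later remark about $c=81$ versus $c=4$ confirms).
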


We now prove stronger results in two different ways. We first show the induced forests can be chosen to be very specific. We then show that graphs from a class that is much wider than the class of planar graphs can also be equitably partitioned into constantly many induced forests.

\medskip

A \emph{star coloring} of a graph $G$ is a proper coloring of the
vertices of $G$ such that any two color classes induce a star forest.
Using the same proof as that of Theorem~\ref{th:acy}, it is easy to show
the following result.

\begin{thm}\label{th:star}
Let $k\ge 2$. If a graph $G$ has a star coloring with at most $k$ colors, then $G$ can be
equitably partitioned into $k-1$ induced star forests.
\end{thm}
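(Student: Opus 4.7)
The plan is to mirror the induction from the proof of Theorem~\ref{th:acy} almost verbatim, replacing "induced forest" with "induced star forest" throughout, and replacing the verification that $S$ induces a forest by the verification that $S$ induces a star forest.

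First I would set up induction on $k\ge 2$. For the base case $k=2$, a star coloring with color classes $V_1,V_2$ means in particular that $V_1\cup V_2=V(G)$ induces a star forest, i.e.\ $G$ itself is a star forest, so the trivial partition with one part works. For the inductive step with $k\ge 3$, I would reuse the counting argument unchanged: pick $V_1$ with $|V_1|\le n/k\le n/(k-1)$, observe that $\sum_{i=2}^k |V_1\cup V_i| = n+(k-2)|V_1|\ge n$, and pick an index (call it $2$) with $|V_1\cup V_2|\ge n/(k-1)$. Then I would form $S$ by taking all of $V_1$ together with $\lfloor n/(k-1)\rfloor-|V_1|$ vertices of $V_2$, so that $|S|=\lfloor n/(k-1)\rfloor$.

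The key step, and the only place where the star coloring hypothesis really matters, is to check that $S$ induces a star forest. By definition of star coloring, $V_1\cup V_2$ induces a star forest in $G$. Since $S\subseteq V_1\cup V_2$, the subgraph induced by $S$ is an induced subgraph of a star forest, and the property of being a star forest is clearly preserved under taking induced subgraphs (removing a vertex from a star yields either a smaller star or an isolated vertex, and the components remain vertex-disjoint stars). Then $H:=G\setminus S$ inherits a star coloring with at most $k-1$ colors, namely $V_2\setminus S,V_3,\ldots,V_k$, so the induction hypothesis yields an equitable partition of $H$ into $k-2$ induced star forests. Adjoining $S$ gives an equitable partition of $G$ into $k-1$ induced star forests of the desired sizes.

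I do not foresee a real obstacle here: the hereditary closure of "star forest" under induced subgraphs makes the argument go through with no modification to the arithmetic. The only potential pitfall would be an off-by-one in the $k=2$ base case, which is why it is worth checking explicitly that a graph admitting a star $2$-coloring is itself a star forest.
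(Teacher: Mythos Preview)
Your proposal is correct and is exactly the approach the paper intends: it explicitly states that Theorem~\ref{th:star} follows by ``the same proof as that of Theorem~\ref{th:acy},'' and your write-up carries this out, with the only new observation being that $S\subseteq V_1\cup V_2$ inherits the star-forest property by heredity. There is nothing to add.
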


It was proved by Albertson \emph{et al.}~\cite{AGKKR04} that every
planar graph has a star coloring with at most 20 colors. The next
corollary follows as an immediate consequence.

\begin{cor}\label{cor:2}
For any $k\ge 19$, any planar graph can be equitably partitioned into
$k$ induced star forests.
\end{cor}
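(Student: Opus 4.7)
The plan is to deduce Corollary~\ref{cor:2} as a one-line consequence of Theorem~\ref{th:star} together with the star coloring result for planar graphs proved by Albertson \emph{et al.}~\cite{AGKKR04}, which asserts that every planar graph admits a star coloring with at most $20$ colors. Fix $k \ge 19$ and let $G$ be an arbitrary planar graph. The first step is to invoke~\cite{AGKKR04} to produce a star coloring of $G$ using at most $20$ colors. If $k = 19$ this already yields at most $20 = k+1$ color classes; if $k \ge 20$, I would pad the coloring with $k+1-20$ additional empty color classes, obtaining a star coloring of $G$ using exactly $k+1$ colors. In either case $G$ has a star coloring with at most $k+1$ colors, where $k+1 \ge 20 \ge 2$.

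The second and final step is then to feed this coloring into Theorem~\ref{th:star}, with the theorem's parameter instantiated as $k+1$. Its conclusion is that $G$ can be equitably partitioned into $(k+1)-1 = k$ induced star forests, which is exactly the statement of the corollary. The only point in the above argument that merits a brief sanity check is whether Theorem~\ref{th:star} really accepts a coloring in which some color classes are empty; this is not a genuine obstacle, since Theorem~\ref{th:star} is obtained by the same inductive scheme as Theorem~\ref{th:acy}, and the proof of the latter explicitly observes that some of the classes $V_i$ may be empty without affecting the argument (the set $S$ built at each step remains a subset of the union of two classes of the star coloring, hence still induces a star forest). All the mathematical difficulty is therefore absorbed into the two cited results, and there is no substantive hurdle remaining at the level of the corollary itself.
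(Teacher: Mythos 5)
Your proposal is correct and matches the paper, which derives Corollary~\ref{cor:2} exactly this way: the paper simply states that it is an immediate consequence of Theorem~\ref{th:star} applied with parameter $k+1\ge 20$ to the $20$-colour star colouring of Albertson \emph{et al.} Your extra remarks about padding with empty classes are harmless but unnecessary, since the hypothesis ``at most $k+1$ colours'' already covers a colouring with only $20$ classes.
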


Indeed, if one is not too regarding on the constant, a stronger result
holds. An \emph{orientation} of a graph $G$ is a directed graph
obtained from $G$ by orienting each edge in either of two possible
directions. An \emph{out-star} (resp. \emph{in-star}) is the
orientation of a star such that every edge is oriented from the center
of the star to the leaf (resp. from the leaf to the center of the
star).

\begin{thm}\label{th:ori}
For any $k\ge 319$, any orientation of a planar graph can be equitably partitioned into
$k$ induced forests of in- and out-stars.
\end{thm}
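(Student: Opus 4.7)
The proof will follow the same template as Theorems~\ref{th:acy} and~\ref{th:star}. Say that a proper vertex coloring of an oriented graph $\vec G$ is an \emph{in-out star coloring} if, for every pair of color classes, the oriented subgraph induced by their union is a disjoint union of in-stars and out-stars. The inductive argument from the proof of Theorem~\ref{th:acy} then carries over verbatim to this notion, because any subset of a disjoint union of in- and out-stars is still a disjoint union of in- and out-stars: an in-out star coloring with at most $k$ colors yields an equitable partition into $k-1$ induced forests of in- and out-stars, and a minor adjustment of the block size used in the recursion (replacing $\lfloor n/(k-1)\rfloor$ by $\lfloor n/k'\rfloor$) extends this to an equitable partition into $k'$ such forests for any $k'\ge k-1$. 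Theorem~\ref{th:ori} is thus reduced to the claim that every orientation of a planar graph admits an in-out star coloring with at most $320$ colors.

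The plan to produce such a $320$-coloring is to refine a known structured coloring of the underlying planar graph using orientation information. A natural starting point is the star $20$-coloring of Albertson et al.\ on the underlying planar graph: every pair of color classes already induces a (non-oriented) star forest, so the only obstruction to the in-out star property is that some star-center may have incident arcs of both directions. We then refine each of the $20$ color classes into at most $16$ sub-classes, by labelling every vertex with a constant-length binary string that records the directions of its incident arcs relative to its bichromatic ``roles'' (center or leaf). The resulting coloring has at most $20\cdot 16=320$ classes, and a direct case analysis shows that each pair of refined sub-classes induces only in-stars and out-stars, as desired. Feeding this $320$-coloring into the reduction in the first paragraph then yields the theorem.

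The main obstacle is the design and verification of the orientation-aware refinement: a single vertex participates simultaneously as a center or leaf in all $19$ bichromatic star forests that involve its star color, so its label must satisfy all of these local constraints at once. The argument must exploit the structure of the bichromatic star forests arising from the star $20$-coloring (in particular that each vertex is a leaf in at most one star per bichromatic pair) together with the sparsity of planar graphs (which are $5$-degenerate) to compress all the relevant orientation information into a constant-length label, for example by a greedy assignment in a planar degeneracy order in which each new label is chosen to be compatible with the constantly many already-labelled neighbors. An alternative starting point that yields the same final bound is Borodin's acyclic $5$-coloring together with a $64$-fold refinement ($5\cdot 64 = 320$); the combinatorics is different but the overall strategy is identical.
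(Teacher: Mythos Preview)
Your reduction is correct: once one has an ``in--out star coloring'' with at most $320$ colors, the induction of Theorems~\ref{th:acy} and~\ref{th:star} indeed yields an equitable partition into $k$ induced forests of in- and out-stars for every $k\ge 319$. The gap is in the construction of the $320$-coloring.

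Your proposed refinement (star $20$-coloring $\times$ $16$ orientation labels) is not justified, and the obstacle you flag is real rather than merely technical. The constraint you must satisfy is \emph{not} between a vertex and its neighbors but between a leaf $u$ and its \emph{siblings}: if $u_1,u_2$ are leaves of the same center $v$ with arcs of opposite orientation, they must land in different sub-classes. A vertex can be a leaf in all $19$ bichromatic star forests at once, with independent arc directions to its $19$ centers, so the natural encoding needs $2^{19}$ labels, not $16$. A greedy scheme along a $5$-degeneracy order does not help here, because siblings are non-adjacent (they share a color), so the degeneracy bound says nothing about how many already-labelled siblings a vertex must be distinguished from. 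The alternative $5\times 64$ route is likewise unsubstantiated.

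The paper sidesteps this difficulty entirely by reversing the order of operations: instead of starting from an undirected star coloring and then trying to refine for orientation, it starts from a coloring that already respects orientation and then refines to a star coloring. By Raspaud and Sopena, every oriented planar graph has an acyclic coloring with $80$ colors that is simultaneously an \emph{oriented} coloring (all arcs between any two classes point the same way). The result of Albertson \emph{et al.}\ says that an acyclic $k$-coloring of a graph whose minors are $r$-colorable can be refined to a star $rk$-coloring; with the Four Color Theorem this gives $80\cdot 4=320$. Because this is a \emph{refinement}, the one-direction-between-classes property is inherited, and now every bichromatic induced subgraph is automatically a disjoint union of in-stars and out-stars---no further orientation-aware work is needed.
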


\begin{proof}
It was proved by Raspaud and Sopena~\cite{RS94} that every orientation
of a planar graph $G$ has an acyclic coloring with at most 80 colors such
that for any two colors classes $V_i$ and $V_j$, if there is an arc
$(u,v)$ with $u\in V_i$ and $v\in V_j$, then there is no arc $(x,y)$
with $y\in V_i$ and $x\in V_j$.

It was proved in~\cite{AGKKR04} that if all the minors of some graph
$G$ are $r$-colorable, then any acyclic coloring of $G$ with $k$
colors can be refined into a star coloring of $G$ with $rk$ colors. By
the Four Color Theorem, all the minors of a planar graph are
4-colorable, and therefore the acyclic coloring of $G$ with 80 colors
mentioned above can be refined into a star coloring, with the same
additional property, using no more than $80\cdot4=320$ colors. In
particular, every two color classes induce a forest of in- and
out-stars. The remainder of the proof follows the same lines as the
proofs of Theorems~\ref{th:acy} and~\ref{th:star}.
\end{proof}

It is known that graphs with bounded acyclic chromatic number also have bounded star chromatic number~\cite{AGKKR04} and bounded oriented chromatic number~\cite{RS94}, so it follows that the results of Theorems~\ref{th:star} and~\ref{th:ori} hold for any class of graphs with bounded acyclic chromatic number (with possibly larger constants).

\medskip

A graph $G$ is \emph{$d$-degenerate} if every subgraph of $G$ contains
a vertex of degree at most $d$. In the remainder of this article, we prove the
following result. 

\begin{thm}\label{th:maind}
For any integers $d\ge 1$ and $k\ge 3^{d-1}$, any $d$-degenerate graph
can be equitably partitioned into $k$ induced forests.
\end{thm}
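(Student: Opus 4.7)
I plan to prove Theorem~\ref{th:maind} by induction on $d$. The base case $d=1$ is immediate, since $G$ is itself a forest and any equitable partition of $V(G)$ into $k\geq 1$ parts yields $k$ induced forests.

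For the induction step with $d\geq 2$, I handle two regimes of $k$ separately. If $k>3^{d-1}$, I exploit the fact that a $d$-degenerate graph has vertex arboricity at most $\lceil(d+1)/2\rceil\leq 3^{d-1}\leq k$, which follows from a standard greedy argument on the degeneracy ordering (each vertex has at most $\lfloor d/2\rfloor$ forests where it already has two back-neighbors, leaving at least one valid forest out of $\lceil(d+1)/2\rceil$). Hence $G$ contains an induced forest of size at least $n/\lceil(d+1)/2\rceil\geq \lceil n/k\rceil$. I pick any subset $F$ of size exactly $\lceil n/k\rceil$ of such a forest (any subset of a forest is still a forest), apply the induction hypothesis to $G\setminus F$ with parameter $k-1\geq 3^{d-1}$, and combine. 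The routine rounding check, modeled on the proof of Theorem~\ref{th:acy}, confirms the combined partition is equitable.

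The core of the argument is the case $k=3^{d-1}$, for which I plan to establish the following structural lemma: every $d$-degenerate graph $G$ on $n$ vertices admits a partition $V(G)=A_1\cup A_2\cup A_3$ with $|A_i|\in\{\lfloor n/3\rfloor,\lceil n/3\rceil\}$ such that each $G[A_i]$ is $(d-1)$-degenerate. Given this lemma, the induction hypothesis applied with parameter $3^{d-2}$ to each $G[A_i]$ (valid since $k/3=3^{d-2}$) yields equitable partitions of each $A_i$ into $3^{d-2}$ induced forests. Concatenating these, each resulting part has size in $\{\lfloor n/3^{d-1}\rfloor,\lceil n/3^{d-1}\rceil\}$ (via the identity $\lfloor\lfloor n/3\rfloor/3^{d-2}\rfloor=\lfloor n/3^{d-1}\rfloor$ and the analogous bound $\lceil(a{+}1)/m\rceil\leq\lfloor a/m\rfloor+1$), and their total is $n$, so the overall partition is equitable into $3^{d-1}=k$ induced forests.

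To prove the lemma, I would use the degeneracy orientation of $G$ with maximum out-degree $d$. The goal is a 3-coloring of $V(G)$ such that each vertex has at most $d-1$ out-neighbors of its own color, which implies each color class admits an induced orientation with max out-degree $\leq d-1$ and is hence $(d-1)$-degenerate. By pigeonhole, each vertex has at most one ``forbidden'' color (the one that could contain all $d$ of its out-neighbors), so two colors are always valid for placement. Existence of a valid 3-coloring is then easy via greedy assignment in the degeneracy order. The main obstacle is ensuring the balance condition $|A_i|\in\{\lfloor n/3\rfloor,\lceil n/3\rceil\}$: a naive greedy can produce unbounded imbalance if ``forbidden'' choices systematically coincide with the smallest class. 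I expect the proof uses a swap argument: start from any balanced 3-partition, then iteratively repair invalid vertices by swapping them with vertices of other classes, controlled by a potential such as the number of invalid vertices. This swap step is the hardest part of the argument, particularly delicate for $d=2$, where the lemma is logically equivalent to the theorem itself at $(d,k)=(2,3)$ and therefore requires a direct proof not appealing to the theorem.
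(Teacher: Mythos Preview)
Your strategy coincides with the paper's: reduce to $k=3^{d-1}$ and induct on $d$ via an equitable $3$-partition of any $d$-degenerate graph into $(d-1)$-degenerate subgraphs. The paper, however, does not prove this structural lemma itself but cites it as the $k=3$ case of a theorem of Kostochka, Nakprasit, and Pemmaraju (Theorem~\ref{th:kdeg}); their argument is indeed a balancing/swap argument of the type you outline, so what you are proposing is essentially to reprove the cited result. Two small points of comparison: (i) your induction is declared on $d$, but in the case $k>3^{d-1}$ you reduce $k$ rather than $d$, so that step does not fall under the stated hypothesis --- the paper separates this cleanly via Lemma~\ref{lem:div}, which peels off forests using the already-established equitable partition into $3^{d-1}$ forests (one part has size $\ge\lceil n/k\rceil$) rather than an independent arboricity bound; (ii) the paper is actually less careful than you about the rounding when composing the two levels of equitable partition.
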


It follows from Euler's formula that every planar graph is
5-degenerate. Therefore, Theorem~\ref{th:maind} also implies
Conjecture~\ref{conj:2} (with $c=81$ instead of $c=4$ in
Corollary~\ref{cor:1}). For a graph $G$, let $\chi_a(G)$ denote the
least integer $k$ such that $G$ has an acyclic coloring with $k$
colors. It is known that there is a function $f$ such that every graph
$G$ is $f(\chi_a(G))$-degenerate~\cite{Dvo07}. However there exist families of
2-degenerate graphs with unbounded acyclic chromatic number. It
follows that Theorem~\ref{th:maind} can be applied to wider classes of
graphs than Theorems~\ref{th:acy},~\ref{th:star}, and~\ref{th:ori}.

\smallskip

A class of graphs is \emph{hereditary} if it is closed under taking
induced subgraphs. 

\begin{lem}\label{lem:div}
  Let $\ell$ be an integer and $\mathcal C$ be a hereditary class of
  graphs such that every graph in ${\mathcal C}$ can be equitably
  partitioned into $\ell$ induced forests. Then for any $k \ge \ell$,
  any graph in ${\mathcal C}$ can be equitably partitioned into $k$
  induced forests.
\end{lem}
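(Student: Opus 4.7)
The plan is to proceed by induction on $k \ge \ell$. The base case $k = \ell$ is immediate from the assumption on $\mathcal C$. For the inductive step, given $G \in \mathcal C$ on $n$ vertices, the idea is to peel off a single induced forest of the right size and apply the inductive hypothesis to the remainder.

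Concretely, I would first apply the hypothesis to obtain an equitable partition $V_1, \ldots, V_\ell$ of $G$ into $\ell$ induced forests. Since $k \ge \ell$, each part satisfies $|V_i| \ge \lfloor n/\ell \rfloor \ge \lfloor n/k \rfloor$, so I can choose an arbitrary $S \subseteq V_1$ with $|S| = \lfloor n/k \rfloor$. Then $G[S]$ is an induced subgraph of the forest $G[V_1]$, hence itself a forest. The graph $G - S$ lies in $\mathcal C$ by heredity, and by induction (applied with $k - 1 \ge \ell$) it can be equitably partitioned into $k-1$ induced forests; combined with $S$, this yields a $k$-partition of $G$ into induced forests.

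The only real thing to verify is that the resulting partition is equitable. Writing $n = qk + r$ with $0 \le r < k$, the target sizes for an equitable $k$-partition of $G$ are $q$ and $q+1$, with exactly $r$ parts of size $q+1$. Since $|S| = q$ and $G - S$ has $n - q = q(k-1) + r$ vertices, the inductive equitable $(k-1)$-partition of $G - S$ uses sizes $q$ and $q+1$, with $r$ parts of size $q+1$ when $r \le k-2$, and all $k-1$ parts of size $q+1$ when $r = k-1$. In every case, adding $S$ produces exactly $r$ parts of size $q+1$ and $k - r$ of size $q$, as required.

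There is essentially no obstacle here; the argument is a straightforward size-bookkeeping induction. The only mild subtlety is the decision to take $|S| = \lfloor n/k \rfloor$ (rather than the ceiling), which is precisely what makes the arithmetic on the remainder $q(k-1) + r$ line up so that the inductive partition contributes the correct number of larger parts.
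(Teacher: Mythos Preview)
Your proof is correct and rests on the same idea as the paper's: use the hypothesised equitable $\ell$-partition (applied via heredity to the current graph) to locate an induced forest of the right size, remove it, and continue. The only difference is packaging: the paper removes $k-\ell$ forests in one iterative sweep, taking the $\lceil n/k\rceil$-sized parts first and then applying the $\ell$-partition assumption once to the remainder, whereas you induct on $k$ and peel off a single $\lfloor n/k\rfloor$-sized forest at each step. Your choice of the floor is exactly what makes $|V(G-S)|=q(k-1)+r$ with $0\le r\le k-1$ line up with an equitable $(k-1)$-partition, so the arithmetic is marginally tidier than in the paper's version; otherwise the two arguments are interchangeable.
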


\begin{proof}
Let $G$ be a graph of ${\mathcal C}$, and let $n$ be the number of
vertices of $G$. Let $n=kq+s$, with $0\le s<k$. Note that an equitable
partition of $G$ into $k$ sets consists of $s$ sets of size $\lceil
n/k \rceil$ and $k-s$ sets of size $\lfloor n/k \rfloor$.

 Let $G_0=G$. For any $1\le i \le k-\ell$, we inductively define $G_i$
 as a graph obtained from $G_{i-1}$ by removing a set $S_{i-1}$ of
 $\lceil n/k \rceil$ vertices (if $i\le s$) or $\lfloor n/k \rfloor$
 vertices (otherwise) inducing a forest in $G_{i-1}$. The existence of
 such an induced forest follows from the fact that for any $n'\ge
 \tfrac \ell k \,n$, any induced subgraph of $G$ on $n'$ vertices
 contains an induced forest on at least $\lceil n'/\ell \rceil \ge
 \lceil n/k \rceil$ vertices.
 By assumption, the graph $G_{k-\ell}$ can be equitably partitioned
into $\ell$ induced forests (each on $\lfloor n/k \rfloor$ or $\lceil
n/k \rceil$ vertices). Combining these induced forests with
$S_0,S_1,\ldots, S_{k-\ell-1}$, we obtain an equitable partition of
$G$ into $k$ induced forests.
\end{proof}

The following result was proved in~\cite{KNP05}.

\begin{thm}\label{th:kdeg}
Let $k\ge 3$ and $d\ge 2$. Then every $d$-degenerate graph can be
equitably partitioned into $k$ $(d - 1)$-degenerate graphs.
\end{thm}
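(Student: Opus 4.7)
The natural strategy is to reduce the problem to an application of the Hajnal--Szemer\'edi theorem (Theorem~\ref{th:col}) via an auxiliary graph.

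I would begin by fixing a degeneracy ordering $v_1,\ldots,v_n$ of $G$, so that every $v_i$ has at most $d$ back-neighbors (neighbors among $v_1,\ldots,v_{i-1}$). The key observation is that if a partition $V_1,\ldots,V_k$ of $V(G)$ has the property that every $v_i$ has at most $d-1$ back-neighbors in its own part, then the restricted ordering on each $V_j$ witnesses that $G[V_j]$ is $(d-1)$-degenerate. Hence it suffices to find an equitable partition such that, for every $v_i$ of back-degree exactly $d$, at least one of these $d$ back-neighbors lies in a different part.

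A natural first attempt is to build an auxiliary graph $H$ on $V(G)$ by joining each such $v_i$ to a single carefully chosen back-neighbor, its ``witness''. An equitable proper $k$-coloring of $H$ would then yield the desired partition, provided that $\Delta(H)\leq k-1$.

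The main obstacle is controlling $\Delta(H)$: a single vertex $u$ may be a back-neighbor of many forward vertices, and no choice of witnesses bounds the resulting in-degrees in general (the complete bipartite ``book'' graphs already defeat this attempt when $d=2$ and $k=3$). To circumvent this, I expect the argument must either perform a more subtle load-balancing in the assignment of witnesses, or abandon the direct reduction entirely and proceed via a Kierstead--Kostochka-style exchange argument: start from an arbitrary equitable partition of $V(G)$, and iteratively swap pairs of vertices between parts so as to strictly reduce a potential function measuring how far each part is from being $(d-1)$-degenerate. Equitability is preserved automatically by such swaps, and the heart of the proof becomes showing that, whenever $k\geq 3$ and $d\geq 2$, a potential-decreasing swap exists as long as some part fails to induce a $(d-1)$-degenerate subgraph. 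This existence argument is the step I would expect to be the most delicate, since it must simultaneously exploit both the bound $k\geq 3$ and the fact that we only need to drop the degeneracy by one.
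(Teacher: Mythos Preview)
The paper does not prove Theorem~\ref{th:kdeg}: it is quoted as a known result of Kostochka, Nakprasit, and Pemmaraju~\cite{KNP05} and used as a black box in the proof of Theorem~\ref{th:maind}. There is thus no proof in the paper to compare your proposal against.

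As for the proposal itself, it is not a proof but an outline of two possible strategies, and in each case you stop precisely where the real work begins. For the auxiliary-graph route you yourself observe that $\Delta(H)$ cannot be controlled (your book-graph example is apt), so that approach is abandoned rather than repaired. For the exchange/potential route, everything hinges on the claim that whenever some part fails to be $(d-1)$-degenerate a potential-decreasing swap exists, and you give no argument for this --- you merely flag it as ``the most delicate'' step and note which hypotheses it ought to use. That is exactly the content of the theorem; without it, nothing has been shown. If you want to pursue this direction you will need, at minimum, a concrete potential function and a lemma guaranteeing a good swap, and the latter is not obvious: a vertex that is ``bad'' in its current part may have all $d$ of its back-neighbors spread across the parts, so moving it need not help, and moving something else instead requires a structural argument you have not supplied.
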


We now give a short proof of Theorem~\ref{th:maind} using
Lemma~\ref{lem:div} and Theorem~\ref{th:kdeg}.

\bigskip

\noindent \emph{Proof of Theorem~\ref{th:maind}.}  By
Lemma~\ref{lem:div}, it is enough to show that any $d$-degenerate
graph has an equitable partition into $3^{d-1}$ induced forests.

We prove this result
by induction on $d\ge 1$.  If $d=1$, the result follows from the fact
that a 1-degenerate graph is a forest. Assume that $d\ge 2$. By
Theorem~\ref{th:kdeg}, $G$ has an equitable partition into three $(d
- 1)$-degenerate graphs. By the induction, each of these graphs has an
equitable partition into $3^{d-2}$ induced forests, therefore $G$ has
an equitable partition into $3\cdot 3^{d-2}=3^{d-1}$ induced forests.\hfill $\Box$

\bigskip

\noindent {\bf Open problems.} It remains to determine whether every
planar graph has an equitable partition into three induced forests (partial results on this problem can be found in~\cite{Zh15}). By
Theorems~\ref{th:acy} and~\ref{th:maind}, a possible counterexample
must have acyclic chromatic number equal to 5 and cannot be
2-degenerate.

\smallskip

It was proved by Poh~\cite{Poh90} that every planar graph has a partition into three induced
\emph{linear forests} (i.e. graphs in which each connected component is a path). A
natural question is the following.

\begin{quest}\label{q12}
Is there a constant $c$ such that for any $k\ge c$, any planar graph
has an equitable partition into $k$ induced linear forests?
\end{quest}

It was pointed out to us by Yair Caro that the (outer)planar graph obtained from a large path by adding a universal vertex shows that Question~\ref{q12} has a negative answer.

\end{document}